\newtheorem{thm}{Theorem}[section]
\newtheorem{lem}[thm]{Lemma}
\theoremstyle{definition}
\theoremstyle{remark}
\newtheorem*{notation}{Notation}
\theoremstyle{remark}
\newtheorem{remark}[thm]{Remark}
\numberwithin{equation}{section}
\newcommand{\R}{{\mathbb R}}
\definecolor{blu}{rgb}{0,0,1}
\newcommand{\al}{\alpha}
\newcommand{\be}{\beta}
\newcommand{\de}{\delta}
\newcommand{\la}{\lambda}
\newcommand{\De}{\Delta}
\newcommand{\vphi}{\varphi}
\newcommand{\eps}{\varepsilon}
\newcommand{\pa}{\partial}
\newcommand{\beq[1]}{\begin{equation}\label{eq:#1}}
\newcommand{\eeq}{\end{equation}}
\begin{document}
\title{Existence and orbital stability of standing waves for nonlinear Schr\"odinger systems}

\author{\sc{Tianxiang Gou}
\and
\sc{Louis Jeanjean}}

\date{}
\maketitle

\begin{abstract}
In this paper we investigate the existence of solutions in $H^1(\R^N) \times H^1(\R^N)$ for nonlinear Schr\"odinger systems of the form
\[
\left\{
\begin{aligned}
-\De u_1 &= \la_1 u_1 + \mu_1 |u_1|^{p_1 -2}u_1
            + r_1\be |u_1|^{r_1-2}u_1|u_2|^{r_2}, \\
-\De u_2 &= \la_2 u_2 + \mu_2 |u_2|^{p_2 -2}u_2
            + r_2 \be |u_1|^{r_1}|u_2|^{r_2 -2}u_2,
\end{aligned}
\right.
\]
under the constraints
\[
\int_{\R^N}|u_1|^2 \, dx = a_1>0,\quad \int_{\R^N}|u_2|^2 \, dx = a_2>0.
\]
Here $ N \geq 1, \beta >0, \mu_i >0, r_i >1,  2 <p_i < 2 + \frac{4}{N}$ for $i=1,2$ and $ r_1 + r_2 < 2 + \frac{4}{N}$. This problem is motivated by the search of standing waves for an evolution problem appearing in several physical models. Our solutions are obtained as constrained global minimizers of an associated functional. Note that in the system  $\la_1$ and $\la_2$ are unknown and will correspond to the Lagrange multipliers. Our main result is the precompactness of the minimizing sequences, up to translation. Assuming the local well posedness of the associated evolution problem we then obtain the orbital stability of the standing waves associated to the set of minimizers.
\end{abstract}

{\bf Keywords}: Nonlinear Schr\"odinger systems, standing waves, orbital stability, minimizing sequences, symmetric-decreasing rearrangements.

%%%%%%%%%%%%%%%%%%%%%%%%%%%%%%%%%%

\section{Introduction}\label{sec:intro}

We consider the existence of solutions to a nonlinear Schr\"odinger system of the form
\beq[1.1]
\left\{
\begin{aligned}
-\De u_1 &= \la_1 u_1 + \mu_1 |u_1|^{p_1 -2}u_1
            + r_1\be |u_1|^{r_1-2}u_1|u_2|^{r_2}, \\
-\De u_2 &= \la_2 u_2 + \mu_2 |u_2|^{p_2 -2}u_2
            + r_2 \be |u_1|^{r_1}|u_2|^{r_2 -2}u_2,
\end{aligned}
\right.
\eeq
satisfying the conditions
\beq[1.2]
\int_{\R^N} |u_1|^2 \, dx = a_1, \quad \int_{\R^N}| u_2|^2 \, dx = a_2.
\eeq
Here $a_1, a_2>0$ are prescribed and we shall assume throughout the paper

\begin{itemize}
\item[(H0)]  $ N \geq 1, \beta >0, \mu_i >0, r_i >1,  2 <p_i < 2 + \frac{4}{N}$ for $i=1,2$ and $ r_1 + r_2 < 2 + \frac{4}{N}$.
\end{itemize}

The problem under consideration is associated to the research of standing waves, namely, solutions having the form
\[
\Psi_1(t,x) = e^{-i\la_1 t} u_1(x), \quad \Psi_2(t,x) = e^{-i\la_2 t} u_2(x),
\]
for some $\la_1, \la_2 \in \R$, of the nonlinear Schr\"odinger system
\begin{equation}\label{1.3}
\begin{cases}
- i \pa_t \Psi_1 = \De \Psi_1 + \mu_1 |\Psi_1|^{p_1-2}\Psi_1
                   +\be |\Psi_1|^{r_1-2}\Psi_1|\Psi_2|^{r_2},\\
- i \pa_t \Psi_2 = \De \Psi_2 + \mu_2 |\Psi_2|^{p_2-2}\Psi_2
                   +\be |\Psi_1|^{r_1}|\Psi_2|^{r_2-2}\Psi_2,
\end{cases} \text{in $\R \times \R^N$}.
\end{equation}
This system comes from mean field models for binary mixtures of Bose-Einstein condensates or for binary gases of fermion atoms in degenerate quantum states (Bose-Fermi mixtures, Fermi-Fermi mixtures), see \cite{Bagnato-etal, EG, M}.

One motivation to look for normalized solutions of system \eqref{eq:1.1} is that the masses
\[
\int_{\R^N} |\Psi_1|^2 \, dx \quad \text{and} \quad \int_{\R^N} |\Psi_2|^2 \, dx
\]
are preserved along the trajectories of \eqref{1.3}. Our solutions of \eqref{eq:1.1}-\eqref{eq:1.2} will be obtained as minimizers of the functional
\[
J(u_1,u_2)
 := \frac12\int_{\R^N} |\nabla u_1|^2 + |\nabla u_2|^2 \, dx
     - \int_{\R^N} \frac{\mu_1}{p_1}|u_1|^{p_1}
     + \frac{\mu_2}{p_2}  |u_2|^{p_2}
     + \be |u_1|^{r_1}|u_2|^{r_2} \, dx
\]
constrained on
$$
S(a_1, a_2): = \{(u_1, u_2)\in H^1(\R^N) \times H^1(\R^N): \|u_1\|_2^2 = a_1, \|u_2\|_2^2 = a_2\}.
$$
Namely we are to consider the  minimization problem
\beq[1.4]
m(a_1, a_2):=\inf_{(u_1, u_2)\in S(a_1, a_2)} J(u_1, u_2).
\eeq
It is standard that the minimizers of \eqref{eq:1.4} are solutions to \eqref{eq:1.1}-\eqref{eq:1.2} where $\la_1, \la_2$ appear as the Lagrange multipliers. Actually the existence of minimizers for \eqref{eq:1.4} will be obtained as a consequence of the stronger statement that any minimizing sequence for \eqref{eq:1.4} is, up to translation, precompact.

\begin{thm} \label{th:min}
Assume (H0). Then for any $a_1 >0$ and $a_2 >0$ all minimizing sequences for \eqref{eq:1.4} are precompact in $H^1(\R^N)\times H^1(\R^N)$ after a suitable translation.
\end{thm}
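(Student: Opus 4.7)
My plan is to apply the concentration-compactness principle of P.-L. Lions, preceded by two standard preliminaries.

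\emph{Boundedness and sign of the infimum.} Using $p_i < 2 + 4/N$, Gagliardo--Nirenberg controls each $\|u_i\|_{p_i}^{p_i}$ by a strictly subquadratic power of $\|\nabla u_i\|_2$. For the coupling, H\"older gives $\int|u_1|^{r_1}|u_2|^{r_2}\,dx \leq \|u_1\|_{r_1+r_2}^{r_1}\|u_2\|_{r_1+r_2}^{r_2}$, and Gagliardo--Nirenberg applied with $r_1+r_2 < 2+4/N$ yields the same subquadratic control in the $H^1$ norms. Hence on $S(a_1, a_2)$,
\[
J(u_1, u_2) \geq \tfrac{1}{4}\bigl(\|\nabla u_1\|_2^2 + \|\nabla u_2\|_2^2\bigr) - C(a_1, a_2),
\]
so $m(a_1, a_2) > -\infty$ and every minimizing sequence is bounded in $H^1(\R^N)\times H^1(\R^N)$. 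The strict inequality $m(a_1, a_2) < 0$ is obtained by testing with the $L^2$-preserving dilation $(t^{N/2}u_1(t\cdot), t^{N/2}u_2(t\cdot))$ of any fixed pair in $S(a_1,a_2)$: the kinetic energy scales as $t^2$ while each nonlinear term scales as a power of $t$ strictly below $2$, so $J < 0$ for $t$ sufficiently small.

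\emph{Strict sub-additivity.} I would establish
\[
m(a_1, a_2) < m(b_1, b_2) + m(a_1 - b_1, a_2 - b_2)
\]
for all $0 \leq b_i \leq a_i$ with $0 < b_1 + b_2 < a_1 + a_2$, adopting the convention that when one coordinate vanishes the corresponding single-species infimum is used. The starting point is the scaling inequality $m(\theta a_1, \theta a_2) < \theta\, m(a_1, a_2)$ for $\theta > 1$: inserting $(\sqrt{\theta}u_1, \sqrt{\theta}u_2)$ into $J$ produces the remainder $-\sum_i \tfrac{\mu_i(\theta^{p_i/2}-\theta)}{p_i}\|u_i\|_{p_i}^{p_i} - \beta(\theta^{(r_1+r_2)/2}-\theta)\int|u_1|^{r_1}|u_2|^{r_2}\,dx$, and since both $\theta^{p_i/2}-\theta$ and $\theta^{(r_1+r_2)/2}-\theta$ are positive while the nonlinear contributions of a minimizing sequence are bounded away from zero (as $m<0$), the remainder is uniformly strictly negative. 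Sub-additivity then follows by a case analysis on the ratios $b_i/a_i$, reducing the general case to the proportional case and boundary cases in which the second term of the splitting is concentrated in a single species.

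\emph{Concentration-compactness.} Given a minimizing sequence $(u_1^n, u_2^n)$, I would apply Lions' lemma to the scalar densities $\rho_n := |u_1^n|^2 + |u_2^n|^2$ with total mass $a_1 + a_2$. Vanishing would force $\|u_i^n\|_{p_i} \to 0$ (by Lions' standard $L^q$ annihilation lemma) and $\int|u_1^n|^{r_1}|u_2^n|^{r_2}\,dx \to 0$ (by H\"older), giving $\liminf J(u_n) \geq 0$, which contradicts $m(a_1,a_2) < 0$. Dichotomy would, after extracting subsequences, yield masses $(a_1^{(1)}, a_2^{(1)})$ and $(a_1 - a_1^{(1)}, a_2 - a_2^{(1)})$ carried in regions separated by distances tending to infinity, with an energy lower bound $m(a_1, a_2) = \lim J(u_n) \geq m(a_1^{(1)}, a_2^{(1)}) + m(a_1 - a_1^{(1)}, a_2 - a_2^{(1)})$, contradicting the strict sub-additivity established above. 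Hence compactness holds: there exist $y_n \in \R^N$ such that $(u_1^n(\cdot + y_n), u_2^n(\cdot + y_n))$ is tight in $L^2 \times L^2$. Together with the $H^1$ bound, tightness upgrades weak $L^2$ convergence to strong, and by interpolation yields strong convergence in $L^{p_i}$ and of the coupling term. The weak $H^1$ limit $(u_1, u_2)$ therefore satisfies $\|u_i\|_2^2 = a_i$ and $J(u_1, u_2) = m(a_1, a_2)$, and lower semicontinuity of the kinetic part forces $\|\nabla u_i^n\|_2 \to \|\nabla u_i\|_2$, upgrading weak $H^1$ convergence to strong.

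The main obstacle I expect is the strict sub-additivity, specifically the boundary regime where the putative dichotomy concentrates one of $b_1, b_2$ at zero, so that one must compare the fully coupled infimum with a decoupled single-species infimum; verifying the strict scaling relation in that degenerate situation, and organizing the case analysis on $(b_1, b_2)$ cleanly in the coupled two-mass setting, is where the heart of the proof lies.
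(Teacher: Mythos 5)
There is a genuine gap, and it sits exactly where you predict: the strict subadditivity step. Your scaling argument dilates \emph{both} masses by the same factor, so it yields $m(\theta a_1,\theta a_2)<\theta\, m(a_1,a_2)$ only along the ray through $(a_1,a_2)$ in the mass plane. A dichotomy produces a splitting $(b_1,b_2)+(a_1-b_1,a_2-b_2)$ in which $(b_1,b_2)$ need not be proportional to $(a_1,a_2)$ --- including the boundary cases such as $(b_1,0)$, where a fully coupled infimum must be compared with a decoupled single-species one --- and no ``case analysis on the ratios $b_i/a_i$'' reduces these to the proportional case: with one constraint the homogeneity trick works because every sub-mass lies on the ray, but with two independent masses the set of splittings is two-dimensional and the ray inequality says nothing off the ray. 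This is precisely the obstruction the authors flag in the introduction (``in the case of multiple constraints how to establish strict subadditivity conditions is much less understood''), and you identify it as the heart of the proof without supplying an argument; as written, the dichotomy exclusion, and hence the whole compactness proof, is not closed.

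It is instructive to see how the paper avoids ever proving \eqref{eq:1.5} in full generality. It establishes only the \emph{weak} inequality $m(a_1,a_2)\leq m(b_1,b_2)+m(a_1-b_1,a_2-b_2)$ (Lemma \ref{lem:m}(iii)). Then, in the two-profile scenario (your dichotomy), a Brezis--Lieb type splitting including the coupling term (Lemma \ref{lem:Leib}) forces both profiles $(u_1,u_2)$ and $(w_1,w_2)$ to be \emph{minimizers} for their own masses; as minimizers they solve \eqref{eq:1.1}, so after Schwarz symmetrization they are positive, radial, decreasing and $C^2$, and Shibata's coupled rearrangement $\{u,v\}^{\star}$ (Lemma \ref{lem:relem}(iv),(v)) then \emph{strictly} decreases the total kinetic energy while adding the $L^2$ masses and not decreasing the nonlinear terms --- delivering the strict inequality only at the single point where it is needed, with the regularity of the profiles as the crucial enabling hypothesis. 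Finally, in the endgame where the weak limit might lose mass, strictness is avoided altogether: weak subadditivity together with $m(a_1-b_1,a_2-b_2)<0$ already yields the contradiction. Your preliminaries (coercivity via Gagliardo--Nirenberg, $m<0$, exclusion of vanishing, and the upgrade from tightness to strong $H^1$ convergence) all match the paper and are sound; to repair the proposal you would need either to import a coupled-rearrangement ingredient of this kind, or to find a genuinely new route to strict subadditivity for non-proportional and boundary splittings.
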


%for the functional $J$ restricted on $S(a_1, a_2)$, then it is standard that minimizers of \eqref{eq:1.4} are solutions of
%the system \eqref{eq:1.1} and \eqref{eq:1.2} where Our first existence result on the system is solved by dealing with minimization problem \eqref{eq:1.4}.

Following some initial works \cite{St1,St2}, the compactness concentration principle of P.L. Lions \cite{Li1, Li2} has had, over the last thirty years, a deep influence on solving minimization problems under constraints.
%see \cite{BS, Ha, NW, Oh}.
Heuristic arguments readily convince that in our problem the compactness of any minimizing sequence holds if the following strict subadditivity conditions are satisfied.
\beq[1.5]
m(a_1, a_2) < m(b_1, b_2)+ m(a_1-b_1, a_2-b_2),
\eeq
where $ 0\leq b_i<a_i $ for $ i=1, 2 $ and $ b_1 + b_2 \neq 0 $.

To deal with just one constraint, several techniques have been developed to prove strict subadditivity conditions. Most are based on some homogeneity type property. In autonomous case it is also possible to use scaling arguments, see for example \cite{BS,CJS,Sh1}. In the case of multiple constraints how to establish strict subadditivity conditions is much less understood. As a matter of fact few papers address the issue of compactness of minimizing sequences for systems as \eqref{eq:1.1}-\eqref{eq:1.2}. Moreover in most of them there is either exactly one constraint \cite{CiZu} or the two constraints cannot be chosen independently \cite{NW2,NW1,Oh}. Concerning \eqref{eq:1.4} the more complete results seem to be due to \cite{NW}. In \cite{NW} the precompactness of minimizing sequences is obtained assuming $N=1$. To exclude the dichotomy the authors crucially applied \cite[Lemma 2.10]{AB} which depends in turn on original ideas introduced in \cite{By}, see also \cite{Ga}. In \cite[Lemma 2.10]{AB} it is shown that the $H^1(\R)$ norm of some functions are strictly decreasing when the masses of the functions are symmetrically rearranged. See also \cite{LiNgWa2} for similar arguments on related problems.

% \textcolor[rgb]{0.00,0.00,1.00}
%{the masses of the functions are symmetrically rearranged} \textcolor[rgb]{0.98,0.00,0.00}{(Sorry, I did not make sense)}.
%Note that similar approaches to the one of \cite{AB,NW} have already been used on other type of systems \cite{NW1,NW2}.

If one is merely interested in the existence of one minimizer, two papers should be mentioned. In \cite{CaChWe} the existence of one minimizer had been achieved still for $N=1$. The restriction on the dimension was subsequently removed in \cite{BJ} where the existence of a minimizer for \eqref{eq:1.4} was obtained in full generality in $H^1(\R^N)$ for $N=2,3,4$ and under some restrictions for $N \geq 5$, see \cite[Theorem 2.1]{BJ} for a precise statement.

In this paper, inspired by \cite{Ik}, we propose an alternatively simple approach to verify the compactness of the minimizing sequences for \eqref{eq:1.4} in any dimension. It is standard that any minimizing sequence $\{(u_1^n,u_2^n)\} \subset S(a_1,a_2)$ is bounded in $H^1(\R^N)\times H^1(\R^N)$ and thus without restriction we can assume that $u_1^n \rightharpoonup u_1$ and $u_2^n \rightharpoonup u_2$ weakly in $H^1(\R^N)$. To demonstrate the strong convergence in $H^1(\R^N) \times H^1(\R^N)$, we first prove the weaker result that, up to translation, $\{(u_1^n, u_2^n)\}$ is strongly convergent in $L^p(\R^N) \times L^p(\R^N)$, $2<p<2^*$. Because we deal with a minimizing sequence it is clear that neither $\{u_1^n\}$ nor $\{u_2^n\}$  can vanish.
We also observe that, in contrast to what happens in $L^2(\R^N)$, the non compactness of $\{u_i^n\}$ in $L^p(\R^N)$ implies the existence of at least two bumps going apart one from another. By bumps we mean here exist a $R < \infty$ and a sequence $\{y_n\} \subset \R^N$ such that $\liminf_{n \to \infty} \int_{B(y_n, R)}|u_i^n|^p dx >0.$ At this point we make use of a very nice result of M. Shibata \cite{Sh2} as presented in \cite[Lemma A.1]{Ik}. This result, which can somehow be considered as an extension of \cite[Lemma 2.10]{AB} to any  dimension, shows that the existence of two or more bumps for one of the sequence $\{u_i^n\}$ contradicts with its  minimizing character. At this point we have proved the compactness of each $\{u_i^n\}$ in $L^p(\R^N)$ and we end the proof of the convergence in $L^p(\R^N)\times L^p(\R^N)$ by showing that the bumps of $\{u_1^n\}$ and $\{u_2^n\}$ cannot move away from each other.

With this convergence,
% we have addressed strong convergence of minimizing sequences in $L^p(\R^N) \times L^p(\R^N)$ for $2<p<2^*$,
assuming that $(u_1, u_2)$ is the weak limit of one minimizing sequence $\{(u_1^n, u_2^n)\}$, we have that $J(u_1, u_2) \leq m(a_1, a_2)$. Namely our functional is lower semicontinuous on minimizing sequences.  If $\|u_1\|_2^2 = a_1$ and $\|u_2\|_2^2 = a_2$ the strong convergence in $H^1(\R^N) \times H^1(\R^N)$ immediately results. Suppose not and assume that $\|u_1\|_2^2 := b_1 < a_1$ or $\|u_2\|_2^2 := b_2 < a_2$. Since $J(u_1, u_2) \leq m(a_1, a_2)$ it follows that $m(b_1,b_2) \leq m(a_1,a_2)$. We then reach a contradiction via observing that the weak version \eqref{eq:1.5}, where an equality is allowed, always holds and that it implies that the function $(c_1,c_2) \mapsto m(c_1,c_2)$ is strictly decreasing in both arguments. For related observations we refer to \cite{JS}, see also \cite{Ik}.

\begin{remark}\label{rem:oneminimizer}
Note that if one is just interested in the existence of one minimizer for \eqref{eq:1.4} a shorter proof can be given. Choosing a minimizing sequence $\{(u_1^n,u_2^n)\} \subset S(a_1,a_2)$  which consists of Schwarz symmetric functions then, thanks to the compact embedding of $H_r^1(\R^N)$ into $L^p(\R^N)$, $2<p<2^*$ (here $H_r^1(\R^N)$ denotes the the subspace of radially symmetric functions of $H^1(\R^N)$), it readily follows that if $(u_1,u_2)$ is the weak limit of $\{(u_1^n,u_2^n)\}$ then $J(u_1,u_2) \leq m(a_1,a_2)$. The rest of the proof is identical to the one of Theorem \ref{th:min}. Alternately it is possible to obtain the existence of a minimizer working directly in $H_r^1(\R^N)\times H_r^1(\R^N)$. In that direction we refer to Remark \ref{rem:radial} later in this paper.
\end{remark}

%To compare our approach to the one of \cite{BJ} note that in \cite{BJ}, where only the existence of one minimizer was considered, the authors worked in the subspace of radially symmetric functions. Thus on one hand the strong convergence in $L^p(\R^N)\times L^p(\R^N)$ is given for free. On the other hand, since we have not more the translational invariance, there is not equivalent to the weak version of \eqref{eq:1.5}. In particular it seems hard to say something on the monotonicity of $m$. In \cite{BJ} showing that the weak limit belongs to $S(a_1,a_2)$ was proved using Liouville's type arguments, as already developed in \cite{Ik}, see also \cite{CaChWe, Ik1}. It is the use of this Liouville's arguments, which induces the restriction on the dimension $N$ in \cite[Theorem 2.1]{BJ}.

% \textcolor[rgb]{0.00,0.00,1.00}{To show that the weak limit lies on the constraint it is not possible in \cite{BJ} to utilize the weak version of \eqref{eq:1.5}} \textcolor[rgb]{0.98,0.00,0.00}{( Why it is not possible to use the weak version of \eqref{eq:1.5} to prove that the weak limit lies on the constraint, I held it maybe work just as the way proceeding to demonstrate our Theorem \ref{th:min})}. In \cite{BJ} showing that the weak limit belongs to $S(a_1,a_2)$ was proved using Liouville's type arguments, as already developed in \cite{Ik}, see also \cite{CaChWe, Ik1}. It is the use of this Liouville's arguments, which induces the restriction on the dimension $N$ in \cite[Theorem 2.1]{BJ}.

\begin{remark}
The scheme to treat the compactness of minimizing sequences for \eqref{eq:1.4} could be carried to deal with $n$ constraints minimization problems on $\R^N$. More precisely,
$$
m(a_1, \cdots, a_n):= \inf_{S(a_1, \cdots, a_n)} J(u_1, \cdots, u_n),
$$
where $S(a_1, \cdots, a_n) := \{(u_1, \cdots, u_n) \in H^1(\R^N) \times \cdots \times H^1(\R^N): \|u_i\|_2^2 = a_i >0
\,\, \text{ for}  \,\, i=1, \cdots, n\}$,
$$
J(u_1, \cdots, u_n) := \frac12 \int_{\R^N} \sum_{i=1}^n |\nabla u_i|^2 \, dx
                    - \int_{\R^N} \sum_{i=1}^n {\frac{\mu_i}{p_i}} |u_i|^{p_i}
                     + \sum_{i \neq j}^n \be_{ij} |u_i|^{r_i} |u_j|^{r_j} \, dx,
$$
and $N \geq 1, \mu_i > 0, \be_{ij} > 0$,
%$ \textcolor[rgb]{0.98,0.00,0.00}{(I felt we do not need this symmetry condition, only assuming $\be_{ij}>0$, which is enough)},
$2 < p_i < 2+ \frac4N, r_i, r_j >1, r_i+r_j < 2+ \frac4N$ for $i, j = 1, \cdots, n.$
\end{remark}

Set
\begin{align*}
G(a_1,a_2) := \{(u_1, u_2) \in S(a_1, a_2): J(u_1, u_2) = m(a_1, a_2)\}.
\end{align*}

%Directly borrowing the methods introduced in \cite[Theorem 4.1]{HS}, note that one could show
%
%
%\begin{prop}
%Under assumptions of Theorem \ref{th:min}, for any $(u_1, u_2) \in G(a_1, a_2)$, $(u_1, u_2) = (e^{i \theta_1}|u_1|, e^{i \theta_2}|u_2|)$ and $|u_1|, |u_2| >0$
%for some ${\theta}_1, {\theta}_2 \in [0, 2\pi[ $.
%\end{prop}

Note that under  assumption (H0) it is not known if \eqref{1.3} is locally well posed. The point being that when $1 <r_i < 2$ for  $i=1,2$ the interaction part is not Lipchitz continuous and in particular the uniqueness may fail. As a consequence our last result which states the orbital stability  of the set of standing waves associated to $G(a_1,a_2)$ is only valid under condition.

\begin{thm}\label{th:sta}
Assume (H0) and the local existence of the Cauchy problem in \eqref{1.3}. Then the set $G(a_1, a_2)$ is orbitally stable, i.e. for any $\eps>0$, there exists $\de>0$
so that if the initial condition $(\psi_1(0), \psi_2(0))$ in system \eqref {1.3} satisfies
\begin{equation*}
\inf_{(u_1, u_2)\in G(a_1, a_2)}
\|(\psi_1(0), \psi_2(0)) - (u_1,u_2)\|\leq \de,
\end{equation*}
then
\begin{equation*}
\sup_{t \geq 0} \inf_{(u_1, u_2) \in G(a_1, a_2)}
\|(\psi_1(t), \psi_2(t)) - (u_1, u_2)\| \leq \eps,
\end{equation*}
where $(\psi_1(t), \psi_2(t))$ is the solution of system \eqref{1.3}
corresponding to the initial condition $(\psi_1(0), \psi_2(0))$ and $\|\cdot\|$ denotes the norm in Sobolev space $H^1(\R^N)$.
\end{thm}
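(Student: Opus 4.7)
The plan is to follow the classical Cazenave--Lions scheme, using Theorem \ref{th:min} as the compactness input. I argue by contradiction: suppose there exist $\eps_0 > 0$, a target $(u_1^0, u_2^0) \in G(a_1,a_2)$, initial data $(\psi_1^n(0),\psi_2^n(0)) \to (u_1^0,u_2^0)$ in $H^1(\R^N) \times H^1(\R^N)$, and times $t_n \geq 0$ such that the corresponding solutions $(\psi_1^n(t),\psi_2^n(t))$ of \eqref{1.3} satisfy
\[
\inf_{(u_1,u_2) \in G(a_1,a_2)} \|(\psi_1^n(t_n),\psi_2^n(t_n)) - (u_1,u_2)\| \geq \eps_0.
\]

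Along the flow of \eqref{1.3} the individual $L^2$-norms $\|\Psi_i\|_2$ and the energy $J(\Psi_1,\Psi_2)$ are preserved, hence $\|\psi_i^n(t_n)\|_2 \to \sqrt{a_i}$ and $J(\psi_1^n(t_n),\psi_2^n(t_n)) \to J(u_1^0,u_2^0) = m(a_1,a_2)$ by continuity of $J$ on $H^1(\R^N) \times H^1(\R^N)$. Under (H0) the mass-subcritical Gagliardo--Nirenberg inequalities (which also guarantee that $m(a_1,a_2) > -\infty$) show that $J + \|\cdot\|_2^2$ is coercive on $H^1(\R^N)\times H^1(\R^N)$, so $\{(\psi_1^n(t_n),\psi_2^n(t_n))\}$ is $H^1$-bounded; the same coercivity together with the conservation laws is what extends the local-in-time solutions to all $t \geq 0$. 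I then renormalize by setting $v_i^n := (\sqrt{a_i}/\|\psi_i^n(t_n)\|_2)\,\psi_i^n(t_n)$, so that $(v_1^n,v_2^n) \in S(a_1,a_2)$. Since $v_i^n - \psi_i^n(t_n) = (c_n^i-1)\psi_i^n(t_n)$ with $c_n^i \to 1$ and $\{\psi_i^n(t_n)\}$ bounded in $H^1$, we have $\|v_i^n - \psi_i^n(t_n)\|_{H^1} \to 0$ and hence, by continuity of $J$, $J(v_1^n,v_2^n) \to m(a_1,a_2)$. Thus $\{(v_1^n,v_2^n)\}$ is a minimizing sequence for \eqref{eq:1.4}.

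By Theorem \ref{th:min}, after a suitable translation $y_n \in \R^N$ and extraction of a subsequence, $(v_1^n(\cdot+y_n),v_2^n(\cdot+y_n)) \to (u_1^*,u_2^*)$ strongly in $H^1(\R^N) \times H^1(\R^N)$ with $(u_1^*,u_2^*) \in G(a_1,a_2)$. Because $J$ and the $L^2$-norms are translation invariant, each translate $(u_1^*(\cdot-y_n),u_2^*(\cdot-y_n))$ still belongs to $G(a_1,a_2)$ and $\|(v_1^n,v_2^n) - (u_1^*(\cdot-y_n),u_2^*(\cdot-y_n))\|_{H^1\times H^1} \to 0$. Combining this with $\|v_i^n - \psi_i^n(t_n)\|_{H^1} \to 0$ gives
\[
\inf_{(u_1,u_2) \in G(a_1,a_2)} \|(\psi_1^n(t_n),\psi_2^n(t_n)) - (u_1,u_2)\| \to 0,
\]
which contradicts the choice of $\eps_0$.

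The substantive ingredient here is precisely Theorem \ref{th:min}; once strong compactness of minimizing sequences modulo translations is available, the orbital stability argument is essentially algebraic. The two genuinely delicate bookkeeping points are that \eqref{1.3} carries \emph{two} independent $L^2$-conservation laws, which legitimizes the separate rescalings of $\psi_1^n$ and $\psi_2^n$ in the renormalization step and sends $J(v_1^n,v_2^n)$ to $m(a_1,a_2)$ rather than to some other value, and that the solutions must be defined up to the times $t_n$; the latter follows from the assumed local well-posedness combined with the $H^1$-coercivity of $J$ on $S(a_1,a_2)$ and the two conservation laws.
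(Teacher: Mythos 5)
Your proposal is correct and follows essentially the same route as the paper: the classical Cazenave--Lions contradiction argument, using the two $L^2$ conservation laws and the conservation of $J$ to renormalize $(\psi_1^n(t_n),\psi_2^n(t_n))$ onto $S(a_1,a_2)$, thereby producing a minimizing sequence for \eqref{eq:1.4} whose precompactness up to translations (Theorem \ref{th:min}) contradicts the assumed $\eps_0$-separation, exactly as in Section \ref{sec:pf-th:sta}. The only slip is in the setup of the contradiction: the negation of the statement yields initial data with $\inf_{(u_1,u_2)\in G(a_1,a_2)}\|(\psi_1^n(0),\psi_2^n(0))-(u_1,u_2)\|\to 0$, not data converging to a single fixed minimizer $(u_1^0,u_2^0)$ (the paper uses the former, and $G(a_1,a_2)$ is translation invariant, hence not compact); this is harmless here because your argument only uses the consequences $\|\psi_i^n(0)\|_2^2\to a_i$ and $J(\psi_1^n(0),\psi_2^n(0))\to m(a_1,a_2)$, which still hold in the general case since $G(a_1,a_2)$ is bounded in $H^1(\R^N)\times H^1(\R^N)$ by coercivity and $J$ is uniformly continuous on bounded sets.
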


This paper is organized as follows: In Section \ref{sec:pre}, we display some preliminary results.
Theorem \ref{th:min} will be completed in Section \ref{sec:pf-th:min}. Section \ref{sec:pf-th:sta} is devoted to Theorem \ref{th:sta}. \medskip

\textbf{Acknowledgements.} The authors  thank N. Ikoma for pointing to them that the local well posedness of the Cauchy problem was not known under the assumptions of Theorem \ref{th:min}. They also thank T. Luo for useful observations on a preliminary version. Finally note that this work has been carried out in the framework of the Project NONLOCAL (ANR-14-CE25-0013), funded by the French National Research Agency (ANR).\\

\begin{notation}
In this paper it is understood that all functions, unless otherwise stated, are complex-valued, but for simplicity
we write $L^p(\R^N),H^1(\R^N)...,$ for any $1\leq p<\infty,$ $L^p(\R^N)$ is the usual Lebesgue space with norm
$$\|u\|_p^p := \int_{\R^N}|u|^p\,dx,$$
and $H^1(\R^N)$ the usual Sobolev space endowed with the norm
$$\|u\|^2 := \int_{\R^N}|\nabla u|^2+|u|^2 \, dx.$$
We denote by $'\rightarrow'$ and $'\rightharpoonup'$ strong convergence and weak convergence, respectively, in corresponding space, and denote by $B(x, R)$ a ball in $\R^N$ of center $x$ and radius $R>0.$
\end{notation}

\section{Preliminary results}\label{sec:pre}

Firstly, let us observe that the functional $J$ is well defined in $H^1(\R^N) \times H^1(\R^N)$. For $r_1, r_2 > 1, r_1+r_2 < 2+\frac4N,$ there is $q>1$
with $2< r_1 q, r_2 q' \leq 2^*, q':= \frac{q}{q-1}$. Hence
\[
\int_{\R^N} |u_1|^{r_1}|u_2|^{r_2} \, dx
   \le \|u_1\|_{r_1q}^{r_1}\|u_2\|_{r_2q'}^{r_2}
   < \infty.
\]
The Gagliardo-Nirenberg inequality
\[
\|u\|_p \le C(N,p)\|\nabla u\|_2^\al \|u\|_2^{1-\al}\quad
 \text{where } \al=\frac{N(p-2)}{2p},
\]
which holds for $u \in H^1(\R^N)$ and $2 \le p \le 2^*$, implies for $(u_1, u_2) \in S(a_1, a_2)$:
\begin{align} \label{eq:2.1}
\begin{split}
&\int_{\R^N} |u_1|^{p_1} \, dx
     \le C(N,p_1,a_1) \|\nabla u_1\|_2^{\frac{N(p_1-2)}{2}}, \\
&\int_{\R^N} |u_2|^{p_2} \, dx
     \le C(N,p_2,a_2) \|\nabla u_2\|_2^{\frac{N(p_2-2)}{2}},
\end{split}
\end{align}
and
\beq[2.2]
\int_{\R^N} |u_1|^{r_1}|u_2|^{r_2} \, dx
  \le \|u_1\|_{r_1q}^{r_1}\|u_2\|_{r_2q'}^{r_2}
  \le C\|\nabla u_1\|_2^{\frac{N(r_1q-2)}{2q}}
  \|\nabla u_2\|_2^{\frac{N(r_2q'-2)}{2q'}}
\eeq
with $C=C(N,r_1,r_2,a_1,a_2,q)$. \medskip

Now recall the rearrangement results of Shibata \cite{Sh2} as presented in \cite{Ik}. Let $u$ be a Borel measurable function on $\R^N$. It is said to vanish at infinity if $|\{x \in \R^N: |u(x)|>t\}|< \infty$ for every $t>0$.
Here $|A|$ stands for the $N$-dimensional Lebesgue measure of a Lebesgue mesurable  set $A \subset \R^N$.
Considering two Borel mesurable functions $u,v$ which vanish at infinity in $\R^N$, we define for $t>0$,
$A^{\star}(u, v; t):= \{x \in \R^N: |x|<r\}$ where $r>0$ is chosen so that
$$
|B(0, r)| = |\{x \in \R^N: |u(x)|>t\}| + |\{x \in \R^N: |v(x)|>t\}|,
$$
and $\{u, v\}^{\star}$ by
$$
\{u, v\}^{\star}(x):= \int_{0}^{\infty} \chi_{A^{\star}(u, v; t)} (x) \, dt,
$$
where $\chi_A(x)$ is a characteristic function of the set $A \subset \R^N$.

\begin{lem} \cite [Lemma A.1]{Ik} \label{lem:relem}
\begin{itemize}
\item[(i)] The function $\{u, v\}^{\star}$ is radially symmetric, non-increasing and lower semi-continuous.
           Moreover, for each $t > 0$ there holds $\{ x \in \R^N : \{u, v\}^{\star} > t\} = A^{\star}(u, v; t)$.
\item[(ii)] Let $\Phi : [0, \infty) \rightarrow [0, \infty)$ be non-decreasing, lower semi-continuous, continuous at $0$
           and $\Phi(0) = 0$. Then $\{\Phi(u),\Phi(v)\}^{\star} = \Phi(\{u, v\}^{\star})$.
\item[(iii)] $\|\{u, v\}^{\star}\|_p^p = \|u\|_p^p + \|v\|_p^p$ \, for $ 1 \leq p < \infty$.
\item[(iv)] If $u, v \in H^1(\R^N)$, then $\{u, v\}^{\star} \in  H^1(\R^N)$
            and $\|\nabla \{u ,v\}^{\star}\|_2^2 \leq \|\nabla u\|_2^2 + \|\nabla v\|_2^2$.
            In addition, if $u, v \in (H^1(\R^N) \cap C^1(\R^N)) \setminus \{0\}$ are radially symmetric, positive and non-increasing, then
            $$
            \int_{\R^N} |\nabla\{u ,v\}^{\star}|^2 \, dx < \int_{\R^N} |\nabla u|^2 + \int_{\R^N} |\nabla v|^2\, dx.
            $$
\item[(v)] Let $u_1, u_2, v_1, v_2 \geq 0$ be Borel measurable functions which vanish at infinity, then
            $$
            \int_{\R^N} (u_1u_2 + v_1v_2) \,dx \leq \int_{\R^N} \{u_1, v_1\}^{\star}\{u_2, v_2\}^{\star} \, dx.
            $$
\end{itemize}
\end{lem}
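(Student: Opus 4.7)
The plan is to treat the five parts in turn, grouping those that are direct consequences of layer-cake and distribution-function identities and handling separately the two substantive inequalities in (iv) and (v).

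For part (i), since the distribution functions $t\mapsto|\{|u|>t\}|$ and $t\mapsto|\{|v|>t\}|$ are nonincreasing and right-continuous in $t$, the radius $r(t)$ defining $A^{\star}(u,v;t)$ inherits these properties. Writing $\{u,v\}^{\star}(x)=\int_0^\infty\chi_{|x|<r(t)}\,dt$ shows at once that this function is radial and nonincreasing in $|x|$, and that $\{x:\{u,v\}^{\star}(x)>t\}=\{|x|<r(t)\}=A^{\star}(u,v;t)$, whence lower semicontinuity follows. Part (ii) reduces to comparing super-level sets: with $\tau(t):=\sup\{s\ge 0:\Phi(s)\le t\}$, the hypotheses on $\Phi$ give $|\{|\Phi(u)|>t\}|=|\{|u|>\tau(t)\}|$ and similarly for $v$, hence $A^{\star}(\Phi(u),\Phi(v);t)=A^{\star}(u,v;\tau(t))$, and the claimed pointwise identity then follows from (i). Part (iii) is an immediate application of the layer-cake formula $\|f\|_p^p=p\int_0^\infty t^{p-1}|\{|f|>t\}|\,dt$ to $\{u,v\}^{\star}$, $u$, and $v$, combined with $|A^{\star}(u,v;t)|=|\{|u|>t\}|+|\{|v|>t\}|$.

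Part (iv) is a P\'olya--Szeg\H{o} type inequality. Since $\|\nabla|u|\|_2\le\|\nabla u\|_2$ (and similarly for $v$) one reduces to nonnegative $u,v$, and then by truncation and mollification to the case of $C^1$ nonnegative functions with no critical level in $(0,\infty)$. For such $u$, the coarea formula and Cauchy--Schwarz on each level set yield
\[
\int|\nabla u|^2\,dx\ge\int_0^\infty\frac{H^{N-1}(\{u=t\})^2}{-\frac{d}{dt}|\{u>t\}|}\,dt,
\]
and likewise for $v$. The classical isoperimetric inequality $H^{N-1}(\{u=t\})\ge N\om_N^{1/N}|\{u>t\}|^{(N-1)/N}$, the quadratic inequality $\frac{x^2}{a}+\frac{y^2}{b}\ge\frac{(x+y)^2}{a+b}$, and the concavity-based subadditivity $A^{(N-1)/N}+B^{(N-1)/N}\ge(A+B)^{(N-1)/N}$ together reconstruct exactly the coarea expression computed for the radial profile of $\{u,v\}^{\star}$, giving the inequality. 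For the strict inequality under the additional assumptions, the isoperimetric and per-level Cauchy--Schwarz steps are equalities since $u,v$ are already radial, positive and nonincreasing; strictness is then inherited from the subadditivity step, which is strict whenever $|\{u>t\}|$ and $|\{v>t\}|$ are both strictly positive (the case $N=1$, where $s^{(N-1)/N}\equiv 1$ on $(0,\infty)$, being handled by a direct check).

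For part (v), the plan is to use the distribution-function identity $\int fg\,dx=\int_0^\infty\int_0^\infty|\{f>s\}\cap\{g>t\}|\,ds\,dt$ for nonnegative $f,g$. Summing over the two pairs and applying $|A\cap B|\le\min(|A|,|B|)$ bounds the left-hand side of (v) by
\[
\int_0^\infty\int_0^\infty\bigl(\min(|\{u_1>s\}|,|\{u_2>t\}|)+\min(|\{v_1>s\}|,|\{v_2>t\}|)\bigr)\,ds\,dt.
\]
The elementary fact $\min(a,c)+\min(b,d)\le\min(a+b,c+d)$ for nonnegative reals then majorizes this by $\int_0^\infty\int_0^\infty\min(|A^{\star}(u_1,v_1;s)|,|A^{\star}(u_2,v_2;t)|)\,ds\,dt$, and since the $A^{\star}(u_i,v_i;\cdot)$ are concentric balls, this final quantity equals $\int\{u_1,v_1\}^{\star}\{u_2,v_2\}^{\star}\,dx$ by the layer-cake identity once more. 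The principal obstacle is the strict version of (iv): tracing the equality cases through the chain of three inequalities (isoperimetric, per-level Cauchy--Schwarz, and concavity-based subadditivity) requires some care, whereas parts (i)--(iii) and (v) reduce essentially to bookkeeping via distribution functions.
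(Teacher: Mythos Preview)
The paper does not prove this lemma at all: it is quoted verbatim from \cite[Lemma A.1]{Ik} and stated without proof, the original arguments being due to Shibata \cite{Sh2}. So there is no ``paper's own proof'' to compare against.

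Your plan is essentially the Shibata--Ikoma argument and is sound in outline. Parts (i)--(iii) are indeed pure bookkeeping with distribution functions and layer cake. For (v), the chain
\[
|A\cap B|\le\min(|A|,|B|),\qquad \min(a,c)+\min(b,d)\le\min(a+b,c+d),
\]
together with the fact that concentric balls realize $|A^{\star}\cap B^{\star}|=\min(|A^{\star}|,|B^{\star}|)$, is exactly the mechanism used in \cite{Sh2}. For (iv), your coarea/isoperimetric/quadratic/subadditivity decomposition is also the standard one. One point to be careful about in the strict inequality: you say strictness comes from the concavity step $A^{(N-1)/N}+B^{(N-1)/N}>(A+B)^{(N-1)/N}$ when $A,B>0$, but this requires $N\ge 2$; for $N=1$ the exponent is $0$ and strictness instead comes from the quadratic step $\tfrac{x^2}{a}+\tfrac{y^2}{b}>\tfrac{(x+y)^2}{a+b}$ (strict unless $x/a=y/b$), combined with the fact that the $C^1$, positive, radially nonincreasing assumptions force the relevant ratios to differ on a set of positive measure of levels. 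Your parenthetical ``direct check'' for $N=1$ should be made to carry this. Otherwise the proposal is correct and matches the literature.
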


\section{Proof of Theorem \ref{th:min}}\label{sec:pf-th:min}

Hereafter, we use the same notation $m(a_1, a_2)$ for $a_1, a_2\geq 0$, namely, one component of $(a_1, a_2)$ may be zero.

In what follows, we collect some basic properties of $m(a_1,a_2).$

\begin{lem} \label{lem:m}
\begin{itemize}
\item[(i)] For any  $a_1, a_2\geq 0$ with either $a_1 >0$ or $a_2 >0$,
 $$-\infty < m(a_1, a_2) < 0.$$
\item[(ii)] $m(a_1, a_2)$ is continuous with respect to $a_1, a_2\geq 0$.
\item[(iii)] For any $a_1 \geq b_1 \geq 0, a_2 \geq b_2 \geq 0, m(a_1, a_2) \leq m(b_1, b_2) + m(a_1-b_1, a_2-b_2)$.
\end{itemize}
\end{lem}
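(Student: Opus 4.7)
My plan for Lemma 3.1 is as follows.

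For part (i), the lower bound $m(a_1,a_2) > -\infty$ I would obtain by substituting the Gagliardo--Nirenberg estimates \eqref{eq:2.1} and \eqref{eq:2.2} into $J$ restricted to $S(a_1,a_2)$: the subcriticality hypotheses $p_i<2+\tfrac{4}{N}$ and $r_1+r_2<2+\tfrac{4}{N}$ produce exponents on $\|\nabla u_i\|_2$ that are strictly less than $2$ in every nonlinear term (the cross-term exponent simplifies via $\tfrac{1}{q}+\tfrac{1}{q'}=1$ to $\tfrac{N(r_1+r_2-2)}{2}$), so Young's inequality absorbs all the negative contributions into a small fraction of $\tfrac12(\|\nabla u_1\|_2^2+\|\nabla u_2\|_2^2)$ and leaves a finite lower bound. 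For the upper bound $m(a_1,a_2)<0$, I would fix any $(u_1,u_2) \in S(a_1,a_2)$ that is nontrivial in each component $i$ with $a_i>0$ and use the mass-preserving scaling $u_i^t(x) := t^{N/2} u_i(tx)$, which gives
\[
J(u_1^t,u_2^t) = \frac{t^2}{2}\bigl(\|\nabla u_1\|_2^2+\|\nabla u_2\|_2^2\bigr) - \sum_{i=1}^{2} t^{N(p_i-2)/2}\frac{\mu_i}{p_i}\|u_i\|_{p_i}^{p_i} - t^{N(r_1+r_2-2)/2}\be \int_{\R^N} |u_1|^{r_1}|u_2|^{r_2}\,dx.
\]
Every subtracted exponent is strictly smaller than $2$ and at least one subtracted term is nonzero, hence the right-hand side is strictly negative for $t$ small enough.

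For part (iii), I would fix $\eps>0$ and pick near-minimizers $(u_1,u_2) \in S(b_1,b_2)$ and $(v_1,v_2) \in S(a_1-b_1,a_2-b_2)$ with $J$-values within $\eps$ of $m(b_1,b_2)$ and $m(a_1-b_1,a_2-b_2)$. After a truncation-plus-renormalization step based on the $H^1$-density of compactly supported functions, both pairs may be assumed of compact support. I would then choose $y \in \R^N$ with $|y|$ large enough that $\operatorname{supp} u_i$ and $\operatorname{supp} v_i(\cdot - y)$ are disjoint for $i=1,2$, and set $w_i := u_i + v_i(\cdot - y)$. Disjointness of supports yields $(w_1,w_2) \in S(a_1,a_2)$, the splitting $\|w_i\|_{p_i}^{p_i} = \|u_i\|_{p_i}^{p_i}+\|v_i\|_{p_i}^{p_i}$, and the vanishing of the mixed terms $\int |u_1|^{r_1}|v_2(\cdot-y)|^{r_2}\,dx$ and $\int |v_1(\cdot-y)|^{r_1}|u_2|^{r_2}\,dx$ in the cross-term expansion. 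Consequently $J(w_1,w_2) = J(u_1,u_2) + J(v_1,v_2)$, and letting $\eps \to 0$ gives the claim.

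For part (ii), I would first record the auxiliary observation that $m(c_1,c_2) \to 0$ as $(c_1,c_2) \to (0,0)$: the upper direction is immediate from $m<0$, while the lower direction follows from the Gagliardo--Nirenberg absorption in (i), since the relevant constants there carry strictly positive powers of $c_i$. At an interior point $(a_1,a_2)$ with $a_i>0$ I would establish continuity by the $L^2$-rescaling $u_i \mapsto \sqrt{a_i'/a_i}\,u_i$ applied to near-minimizers in both directions: the scaling factors tend to $1$, the rescaled $J$ depends continuously on them, and the resulting $\limsup$/$\liminf$ bounds pin $m$ at the target. At a boundary point with, say, $a_2=0$, I would combine (iii) in the form $m(a_1',a_2') \le m(a_1,0) + m(a_1'-a_1,a_2')$ (for $a_1' \ge a_1$), whose last term vanishes by the preliminary observation, with a direct trial-function argument in the opposite direction: scale a near-minimizer of $m(a_1,0)$ to $S(a_1',0)$ and append a small $L^2$-normalized bump of mass $a_2'$ placed far from its support, whose contribution to $J$ tends to $0$.

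The main obstacle is the compact-support reduction in (iii): turning an arbitrary $H^1$ near-minimizer into a compactly supported one without losing more than $\eps$ in $J$ requires $L^{r_iq}$ control on the tail, which Gagliardo--Nirenberg delivers precisely because of the subcritical hypothesis $r_1+r_2<2+\tfrac{4}{N}$. Everything else reduces to standard scaling identities and Young-type estimates.
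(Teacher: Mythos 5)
Parts (i) and (iii) of your proposal are correct and essentially the paper's own argument. For (i) the paper derives coercivity from the same Gagliardo--Nirenberg bounds \eqref{eq:2.1}--\eqref{eq:2.2}, and gets negativity by reducing, via $\be>0$, to $m(a_1,a_2)\le m(a_1,0)+m(0,a_2)$ together with the standard scalar scaling; your direct dilation $u_i^t=t^{N/2}u_i(t\cdot)$ of the pair is the same computation done once instead of twice. For (iii) the paper uses exactly your density-plus-far-translation construction. One small imprecision there: for the exact splitting $J(w_1,w_2)=J(u_1,u_2)+J(v_1(\cdot-y),v_2(\cdot-y))$ you need $\operatorname{supp}u_i$ disjoint from $\operatorname{supp}v_j(\cdot-y)$ for all four pairs $(i,j)$, not only $i=j$; alternatively, same-index disjointness already suffices because the extra mixed terms enter $J$ with the sign $-\be<0$, so you get $J(w_1,w_2)\le J(u_1,u_2)+J(v_1,v_2)$, which is all the inequality requires.

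The genuine gap is in (ii), at boundary points. Both of the arguments you offer there --- the subadditivity bound $m(a_1',a_2')\le m(a_1,0)+m(a_1'-a_1,a_2')$ and the ``scaled near-minimizer plus far-away bump'' construction --- produce competitors in $S(a_1',a_2')$ and hence prove the \emph{same} inequality, namely $\limsup_{(a_1',a_2')\to(a_1,0)}m(a_1',a_2')\le m(a_1,0)$. The opposite inequality $m(a_1,0)\le\liminf_{(a_1',a_2')\to(a_1,0)}m(a_1',a_2')$ is nowhere established, and it does not follow from monotonicity or from (iii) combined with your auxiliary observation $m(c_1,c_2)\to0$: subadditivity only bounds $m(a_1',a_2')$ from below by quantities such as $m(a_1+\de,a_2')+o(1)$, which still carry the vanishing second mass. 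To close it, take $(u_1',u_2')\in S(a_1',a_2')$ with $J(u_1',u_2')\le m(a_1',a_2')+\eps$; coercivity (uniform for masses in a bounded set) bounds the gradients; rescale $u_1'$ in $L^2$ to mass $a_1$ and replace $u_2'$ by $0$. Dropping $\tfrac12\|\nabla u_2'\|_2^2\ge0$ only decreases $J$, while the discarded negative terms $\tfrac{\mu_2}{p_2}\|u_2'\|_{p_2}^{p_2}$ and $\be\int_{\R^N}|u_1'|^{r_1}|u_2'|^{r_2}\,dx$ are $O\bigl((a_2')^{s}\bigr)$ for some $s>0$, by exactly the strictly positive mass-powers in Gagliardo--Nirenberg that you already invoked for the auxiliary observation; this yields $m(a_1,0)\le m(a_1',a_2')+\eps+o(1)$. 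This is, in effect, the paper's proof of (ii): its normalization $v_i^n=a_i^{1/2}u_i^n/\|u_i^n\|_2$ handles interior and boundary points in one sweep (with the convention $v_i^n=0$ when $a_i=0$). It is worth noting that the paper's terse ``reversing the argument'' at a boundary point actually needs your appended-bump device, since the zero component cannot be normalized up to positive mass --- so your sketch fills one small hole in the published write-up while leaving out the other, genuinely necessary, direction.
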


\begin{proof}
(i) Observe that $\frac{N(p_i-2)}{2} < 2$ by $p_i < 2+ \frac4N$ for $i = 1,2$ and that
$$\frac{N(r_1q-2)}{2q} + \frac{N(r_2q'-2)}{2q'} < 2,$$
owing to $r_1+r_2<2+\frac4N$. Thus, it follows from \eqref {eq:2.1}-\eqref{eq:2.2} that $J$ is coercive and in particular
$m(a_1, a_2)>-\infty.$ Now taking into account that $\beta > 0$, one has
\[
m(a_1, a_2) \leq m(a_1,0) + m(0,a_2).
\]
Since $2<p_i < 2 + \frac{4}{N}$ for $i=1,2$, it is standard to show that $m(a_1,0) <0$ (if $a_1>0$) and $m(0,a_2) <0$ (if $a_2 >0$). Thus
$m(a_1,a_2) <0.$

(ii) We assume $(a_1^n, a_2^n)=(a_1, a_2)+o(1)$. From the definition of $m(a_1^n ,a_2^n)$,
for any $\eps>0$, there exists $(u_1^n, u_2^n) \in S(a_1^n, a_2^n)$ such that
\beq[2.3]
J(u_1^n, u_2^n)\leq m(a_1^n, a_2^n)+ \eps.
\eeq
Setting
$$v_i^n := \frac{u_i^n}{\|u_i^n\|_2}a_i^{\frac12}$$
for $i = 1 ,2,$ we have that $(v_1^n, v_2^n) \in S(a_1, a_2)$ and
\beq[2.4]
m(a_1, a_2)\leq J(v_1^n, v_2^n) = J(u_1^n, u_2^n)+o(1).
\eeq
Combining \eqref{eq:2.3} and \eqref{eq:2.4} we obtain
$$
m(a_1, a_2) \leq m(a_1^n, a_2^n)+\eps +o(1).
$$
Reversing the argument we obtain similarly that 
$$
m(a_1^n, a_2^n) \leq m(a_1, a_2) + \eps + o(1).
$$
Therefore, since $\eps >0$ is arbitrary, we deduce that $m(a_1^n, a_2^n) = m(a_1, a_2) + o(1).$

(iii) By density of $C^{\infty}_0(\R^N)$ into $H^1(\R^N)$,
for any $\eps>0$, there exist $(\bar{\vphi}_1, \bar{\vphi}_2),
(\hat{\vphi}_1, \hat{\vphi}_2) \in C^{\infty}_0(\R^N) \times C^{\infty}_0(\R^N)$ with
$\|\bar{\vphi}_i\|_2^2 = b_i, \|\hat{\vphi}_i\|_2^2 = a_i-b_i$ for $i = 1, 2$ such that
\begin{align*}
&J(\bar{\vphi}_1, \bar{\vphi}_2) \leq m(b_1, b_2) + \frac{\eps}{2}, \\
&J(\hat{\vphi}_1, \hat{\vphi}_2) \leq m(a_1-b_1, a_2-b_2) + \frac{\eps}{2}.
\end{align*}
Since $J$ is invariant by translation, without loss of generality, we may assume that
$supp \,\bar{\vphi}_i \cap supp \,\hat{\vphi}_i =\emptyset,$  and then $\|\bar{\vphi}_i+\hat{\vphi}_i\|_2^2
= \|\bar{\vphi}_i\|_2^2+\|\hat{\vphi}_i\|_2^2 = a_i$ for $i=1,2$, as well as
$$
m(a_1, a_2) \leq
      J(\bar{\vphi}_1 + \hat{\vphi}_1, \bar{\vphi}_2 + \hat{\vphi}_2)
      \leq m(b_1, b_2) + m(a_1-b_1 ,a_2-b_2) + \eps.
$$
Thus
$$m(a_1, a_2) \leq m(b_1, b_2) + m(a_1-b_1 ,a_2-b_2).$$
\end{proof}

\begin{lem} \label{lem:Leib}
Assume $r_1, r_2 > 1, r_1 + r_2 < 2 + \frac4N.$ If $(u_1^n, u_2^n)\rightharpoonup (u_1, u_2)$ in $H^1(\R^N)\times H^1(\R^N),$
then
$$
\int_{\R^N}|u_1^n|^{r_1}|u_2^n|^{r_2} - |u_1^n-u_1|^{r_1}|u_2^n-u_2|^{r_2} \, dx
= \int_{\R^N}|u_1|^{r_1}|u_2|^{r_2}\,dx+o(1).
$$
\end{lem}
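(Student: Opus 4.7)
The plan is to reduce the identity to the classical Brezis--Lieb lemma applied componentwise.

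First, via Rellich--Kondrachov on balls combined with a diagonal extraction, I pass to a subsequence along which $u_i^n\to u_i$ almost everywhere on $\R^N$ for $i=1,2$. Setting $w_i^n:=u_i^n-u_i$, then $w_i^n\to 0$ a.e., and $\{w_i^n\}$ stays bounded in every $L^s(\R^N)$ with $2\le s\le 2^*$. Fix $q>1$ as in \eqref{eq:2.2}, so that $2<r_1q,\,r_2q'\le 2^*$; then $\{u_i^n\}$ and $\{w_i^n\}$ are bounded in $L^{r_1q}(\R^N)$ respectively $L^{r_2q'}(\R^N)$.

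Next, I would apply the generalized Brezis--Lieb lemma separately in each slot. Starting from the pointwise inequality $\bigl||a+b|^s-|a|^s-|b|^s\bigr|\le \eps|a|^s+C_\eps |b|^s$ (valid for every $s>1$ and $\eps>0$) with $s=r_i$ and $(a,b)=(w_i^n,u_i)$, raising to the power $q$ respectively $q'$, and combining the standard truncation trick with sending $\eps\to 0$, one obtains the strengthened convergences
\[
h_1^n:=|u_1^n|^{r_1}-|w_1^n|^{r_1}-|u_1|^{r_1}\to 0 \text{ in } L^q(\R^N),\qquad h_2^n:=|u_2^n|^{r_2}-|w_2^n|^{r_2}-|u_2|^{r_2}\to 0 \text{ in } L^{q'}(\R^N).
\]
Expanding
\[
|u_1^n|^{r_1}|u_2^n|^{r_2}=\bigl(|w_1^n|^{r_1}+|u_1|^{r_1}+h_1^n\bigr)\bigl(|w_2^n|^{r_2}+|u_2|^{r_2}+h_2^n\bigr)
\]
and subtracting $|w_1^n|^{r_1}|w_2^n|^{r_2}+|u_1|^{r_1}|u_2|^{r_2}$ gives the decomposition
\[
g_n:=|u_1^n|^{r_1}|u_2^n|^{r_2}-|w_1^n|^{r_1}|w_2^n|^{r_2}-|u_1|^{r_1}|u_2|^{r_2}=|w_1^n|^{r_1}|u_2|^{r_2}+|u_1|^{r_1}|w_2^n|^{r_2}+E_n,
\]
where $E_n$ is a finite sum of products each involving at least one factor $h_i^n$ paired with companion factors uniformly bounded in $L^q(\R^N)$ or $L^{q'}(\R^N)$.

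To conclude, $\|E_n\|_{L^1}\to 0$ by H\"older together with the $L^q$, respectively $L^{q'}$, convergences of $h_i^n$. The two ``main'' terms $|w_1^n|^{r_1}|u_2|^{r_2}$ and $|u_1|^{r_1}|w_2^n|^{r_2}$ tend to zero a.e.\ on $\R^N$ and are uniformly integrable there, since equi-integrability on sets of small measure and tightness at infinity both follow from $|u_j|^{r_j}\in L^q(\R^N)$ or $L^{q'}(\R^N)$ combined with the uniform $L^{r_iq}$- or $L^{r_iq'}$-bound on $\{w_i^n\}$ via H\"older; Vitali's convergence theorem then delivers $\int_{\R^N}(|w_1^n|^{r_1}|u_2|^{r_2}+|u_1|^{r_1}|w_2^n|^{r_2})\,dx\to 0$. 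Adding these contributions gives $\int_{\R^N}g_n\,dx\to 0$, which is precisely the claim. The main obstacle I anticipate is upgrading the Brezis--Lieb convergence of $h_i^n$ from the automatic $L^1$-level to the dual-space $L^q$ (resp.\ $L^{q'}$), since it is this refinement that permits H\"older pairing with the weakly-only-bounded companion factors; the subcritical assumption $r_1+r_2<2+\tfrac{4}{N}$ enters the argument exclusively through the existence of the exponent $q$ of \eqref{eq:2.2}.
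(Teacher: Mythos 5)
Your proof is correct, but it takes a genuinely different route from the paper's. The paper, following \cite[Lemma 2.3]{CZ}, treats $|u_1|^{r_1}|u_2|^{r_2}$ as a \emph{single} two-variable nonlinearity: the mean value theorem and Young's inequality give the pointwise bound $\bigl||b_1+b_2|^{r_1}|c_1+c_2|^{r_2}-|b_1|^{r_1}|c_1|^{r_2}\bigr|\le C\eps\bigl(|b_1|^{r}+|c_1|^{r}+|b_2|^{r}+|c_2|^{r}\bigr)+C_\eps\bigl(|b_2|^{r}+|c_2|^{r}\bigr)$ with the single exponent $r=r_1+r_2<2+\frac4N\le 2^*$; then one positive-part truncation $f_n^\eps$, dominated by the fixed integrable function $|u_1|^{r_1}|u_2|^{r_2}+C_\eps\bigl(|u_1|^{r}+|u_2|^{r}\bigr)$, together with dominated convergence and finally $\eps\to0$, finishes in one stroke --- no dual-exponent refinement is needed, only the $L^1$-boundedness of $|u_i^n-u_i|^{r}$ and $|u_i|^{r}$. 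You instead prove the refined componentwise Brezis--Lieb convergences $h_1^n\to0$ in $L^q$ and $h_2^n\to0$ in $L^{q'}$ and then expand the product; this front-loads the work into the refinement, but your truncation sketch is exactly the right mechanism (e.g. $g_n^\eps:=\bigl(|h_1^n|-\eps|w_1^n|^{r_1}\bigr)^+\le C_\eps|u_1|^{r_1}$, with $(g_n^\eps)^q$ dominated by $C_\eps^q|u_1|^{r_1q}\in L^1$, then $\eps\to0$), and it is a standard, correct fact. What each approach buys: the paper's is shorter and entirely self-contained; yours is modular --- the $L^q$/$L^{q'}$ splittings are reusable, it makes transparent that subcriticality enters only through the existence of the exponent $q$ of \eqref{eq:2.2}, and your cross terms can even be dispatched faster than by Vitali, since $|w_1^n|^{r_1}$ is bounded in $L^q$ and tends to $0$ a.e., hence $|w_1^n|^{r_1}\rightharpoonup 0$ in $L^q$, and pairing against $|u_2|^{r_2}\in L^{q'}$ kills $\int_{\R^N}|w_1^n|^{r_1}|u_2|^{r_2}\,dx$ directly. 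Two small points to polish, neither a genuine gap: since the lemma asserts convergence of the full sequence, add the standard remark that the limit $\int_{\R^N}|u_1|^{r_1}|u_2|^{r_2}\,dx$ is subsequence-independent, so extracting a.e.-convergent subsequences suffices (the paper's dominated-convergence step tacitly relies on the same reduction); and for $N\le 2$ your ``bounded in every $L^s(\R^N)$, $2\le s\le 2^*$'' should be read with $s<\infty$ when $N=2$.
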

\begin{proof}
Since the lemma can be proved following closely the approach of \cite[Lemma 2.3]{CZ}, we only provide the outline of the proof.
For any $b_1, b_2, c_1, c_2 \in \R$ and $\eps>0$, set $r:=r_1 + r_2.$ The mean value theorem and Young's inequality lead
to
\begin{align*}
&\bigl||b_1 + b_2|^{r_1} |c_1 + c_2|^{r_2} - |b_1|^{r_1} |c_1|^{r_2} \bigr| \\
& \leq C \eps \bigl( |b_1|^{r} + |c_1|^{r} + |b_2|^{r} + |c_2|^{r} \bigr)
  + C_{\eps} \bigl( |b_2|^{r} + |c_2|^{r} \bigr).
\end{align*}
Denote $b_1 := u_1^n - u_1, c_1 := u_2^n - u_2, b_2 := u_1, c_2 := u_2.$ Then
\begin{align*}
f_n^{\eps} &:= \bigl[ \bigl ||u_1^n|^{r_1}|u_2^n|^{r_2} - |u_1^n-u_1|^{r_1}|u_2^n-u_2|^{r_2}
               - |u_1|^{r_1}|u_2|^{r_2} \bigr| \bigr.\\
           &- C \eps \bigl. \left( |u_1^n - u_1|^{r} +  |u_2^n - u_2|^{r} + |u_1|^{r} + |u_2|^{r} \right) \bigr]^{+} \\
           &\leq |u_1|^{r_1} |u_2|^{r_2} + C_{\eps} \left( |u_1|^{r} + |u_2|^{r} \right),
\end{align*}
where $u^{+}(x):= \max\{u(x), 0\}$, so the dominated convergence theorem implies that
\beq [domi]
\int_{\R^N} f_n^{\eps} \, dx \rightarrow 0 \quad \text{as} \,\, n \rightarrow \infty.
\eeq
Since
\begin{align*}
&\bigl ||u_1^n|^{r_1}|u_2^n|^{r_2} - |u_1^n-u_1|^{r_1}|u_2^n-u_2|^{r_2} - |u_1|^{r_1}|u_2|^{r_2} \bigr| \\
& \leq f_n^{\eps} + C \eps \bigl( |u_1^n - u_1|^{r} +  |u_2^n - u_2|^{r} + |u_1|^{r} + |u_2|^{r} \bigr),
\end{align*}
by the boundedness of $\{(u_1^n, u_2^n)\}$ in $H^1(\R^N)\times H^1(\R^N)$ and \eqref{eq:domi}, it follows that
$$
\int_{\R^N}|u_1^n|^{r_1}|u_2^n|^{r_2} - |u_1^n-u_1|^{r_1}|u_2^n-u_2|^{r_2} \, dx
= \int_{\R^N}|u_1|^{r_1}|u_2|^{r_2}\,dx+o(1).
$$
\end{proof}

\begin{lem}\label{lem:conv}
Any minimizing sequence for \eqref{eq:1.4} is, up to translation, strongly convergent in $L^p(\R^N)\times L^p(\R^N)$ for  $2<p<2^*$.
\end{lem}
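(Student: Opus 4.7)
My plan is to carry out the three-step strategy outlined in the introduction. Let $\{(u_1^n,u_2^n)\}\subset S(a_1,a_2)$ be a minimizing sequence; by \eqref{eq:2.1}--\eqref{eq:2.2}, $J$ is coercive on $S(a_1,a_2)$, so the sequence is bounded in $H^1(\R^N)\times H^1(\R^N)$.

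\emph{Step 1 (non-vanishing).} I first rule out Lions vanishing for each $\{u_i^n\}$. If $\{u_1^n\}$ were vanishing, Lions' compactness lemma would give $\|u_1^n\|_p\to 0$ for every $p\in(2,2^*)$, which in view of \eqref{eq:2.2} kills both the $|u_1^n|^{p_1}$ term and the coupling term. The liminf would then yield $m(a_1,a_2)\geq m(0,a_2)$, contradicting Lemma \ref{lem:m} (iii) combined with $m(a_1,0)<0$ from Lemma \ref{lem:m} (i); the same argument treats $\{u_2^n\}$.

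\emph{Step 2 (a single bump per component).} For each $i$, I show $\{u_i^n\}$ is $L^p$-relatively-compact after a single translation. Suppose this fails for $i=1$. Lions' concentration-compactness, combined with Step 1, then yields (after extraction) translations $\{y_n\},\{z_n\}$ with $|y_n-z_n|\to\infty$ and smooth cut-offs producing $u_1^n=w_1^n+w_2^n+o_{H^1}(1)$, with $w_1^n,w_2^n$ supported in disjoint balls about $y_n,z_n$ and both non-trivial in $L^2$ and in $L^{p_1}$. I then invoke Lemma \ref{lem:relem}: $\tilde u_1^n:=\{|w_1^n|,|w_2^n|\}^\star$ is radial and preserves the $L^2$- and $L^{p_1}$-masses by (iii); after replacing each $w_i^n$ by a positive $C^1$ radially non-increasing profile around its centre (via Schwarz symmetrization plus smoothing), the strict part of (iv) yields a quantitative drop in Dirichlet energy. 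For the coupling term, using an appropriate companion rearrangement $\tilde u_2^n$ of $u_2^n$ associated to the same two balls and combining parts (ii) and (v) of Lemma \ref{lem:relem}, I obtain $\int(\tilde u_1^n)^{r_1}(\tilde u_2^n)^{r_2}\,dx\geq\int|u_1^n|^{r_1}|u_2^n|^{r_2}\,dx+o(1)$, the key point being that support separation makes all leftover cross-contributions negligible. After a harmless $L^2$-renormalization, $(\tilde u_1^n,\tilde u_2^n)\in S(a_1,a_2)$ has strictly lower $J$-value than $(u_1^n,u_2^n)$, contradicting minimality.

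\emph{Step 3 (matching the bumps).} With translations $\{y_i^n\}$ giving $L^p$-compactness of each $u_i^n(\cdot+y_i^n)$, I claim $|y_1^n-y_2^n|$ stays bounded. Otherwise the $L^p$-masses of $u_1^n$ and $u_2^n$ separate, so $\int|u_1^n|^{r_1}|u_2^n|^{r_2}\,dx\to 0$ and the liminf of $J$ forces $m(a_1,a_2)\geq m(a_1,0)+m(0,a_2)$. On the other hand, the scalar mass-subcritical problems $m(a_i,0)$ are attained by positive radial minimizers $\bar u_i$; centering both at the origin gives $\int|\bar u_1|^{r_1}|\bar u_2|^{r_2}\,dx>0$, so $m(a_1,a_2)\leq J(\bar u_1,\bar u_2)<m(a_1,0)+m(0,a_2)$ since $\beta>0$, a contradiction. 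Thus $(u_1^n,u_2^n)$ is compact in $L^p(\R^N)\times L^p(\R^N)$ after a common translation. The main obstacle I expect is Step 2: producing bumps $w_1^n,w_2^n$ and a companion decomposition of $u_2^n$ that are sufficiently regular and well-separated to trigger the strict inequality in Lemma \ref{lem:relem} (iv) and to absorb all cross-term errors uniformly in $n$.
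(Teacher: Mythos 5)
Your Steps 1 and 3 are essentially sound (Step 3, recoupling attained scalar minimizers at a common centre, is even a correct alternative to the paper's treatment of bump-matching), but Step 2 contains a genuine gap — the very one you flag at the end — and the proposal does not close it. The strict inequality in Lemma \ref{lem:relem}(iv) is qualitative: for two fixed radial, positive, non-increasing $C^1$ functions it gives a strict drop, but no quantified one. You apply it to the $n$-dependent bumps $w_1^n,w_2^n$, so even granting the regularity you need, all you obtain is $J(\tilde u_1^n,\tilde u_2^n)<J(u_1^n,u_2^n)$ for each $n$. Since $J(u_1^n,u_2^n)\to m(a_1,a_2)$, this merely shows the modified sequence is again minimizing; a contradiction with minimality requires a drop uniform in $n$, and the gain from Lemma \ref{lem:relem}(iv) can degenerate as the bumps flatten or spread. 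The auxiliary constructions compound the problem: ``Schwarz symmetrization plus smoothing'' perturbs the Dirichlet energy and the $L^2$ and $L^{p_1}$ masses by errors that would have to be beaten by the (unquantified) strict gain, and the ``companion rearrangement'' of $u_2^n$ is only asserted — $u_2^n$ need not localize near the two balls, and the cross terms between $w_1^n$ and the distant part of $u_2^n$ are not controlled uniformly in $n$.

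The paper avoids exactly this trap by never applying the rearrangement along the sequence. After translating by $y_n$ it extracts a weak limit $(u_1,u_2)\neq(0,0)$; if $L^p$-compactness fails, a second profile $(w_1,w_2)\neq(0,0)$ is extracted along translations $z_n$. The Brezis--Lieb lemma together with Lemma \ref{lem:Leib} and the continuity of $m$ (Lemma \ref{lem:m}(ii)) yields
\begin{equation*}
m(a_1,a_2)\;\geq\; m(b_1,b_2)+J(w_1,w_2)+J(u_1,u_2),
\qquad b_i:=a_i-\|w_i\|_2^2-\|u_i\|_2^2\geq 0,
\end{equation*}
and the subadditivity of Lemma \ref{lem:m}(iii) then forces $J(u_1,u_2)=m(\|u_1\|_2^2,\|u_2\|_2^2)$ and $J(w_1,w_2)=m(\|w_1\|_2^2,\|w_2\|_2^2)$: the two \emph{limit} profiles are exact minimizers of their own problems. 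Schwarz symmetrization plus elliptic regularity then legitimately produces radial, positive, non-increasing $C^2$ representatives, and the strict part of Lemma \ref{lem:relem}(iv) is applied to two \emph{fixed} functions, converting strictness into an inequality between fixed numbers, $m(a_1,a_2)>m(b_1,b_2)+m(a_1-b_1,a_2-b_2)$, which contradicts Lemma \ref{lem:m}(iii). This is the missing idea in your plan: identify the weak limits as minimizers first, so that regularity comes from the Euler--Lagrange system rather than from ad hoc smoothing, and so that no uniformity in $n$ is ever needed. Note also the degenerate case your two-bump scheme does not address ($w_1=0$, $w_2\neq 0$, $u_2=0$), where the gradient term gives no strict gain and the paper instead extracts strictness from the coupling term, using $\beta>0$.
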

\begin{proof}
Assume that $\{(u_1^n, u_2^n)\}$ is a minimizing sequence associated to the functional $J$ on $S(a_1, a_2)$. By the coerciveness of $J$ on $S(a_1,a_2)$, the sequence $\{(u_1^n, u_2^n)\}$ is bounded in $H^1(\R^N) \times H^1(\R^N)$.
If
$$
\sup_{y\in \R^N}\int_{B(y, R)}|u_1^n|^2+|u_2^n|^2\, dx = o(1),
$$
for some $R>0$, then $u_i \rightarrow 0$ in $L^p(\R^N) $ for $2 < p < 2^*, i = 1,2$, see \cite [Lemma I. 1] {Li2}. This is incompatible with the fact that $m(a_1, a_2) < 0$, see Lemma \ref{lem:m} (i).
Thus, there exist a $\beta_0 >0$ and a sequence $\{y_n\} \subset \R^N$ such that
$$
\int_{B(y_n, R)}|u_1^n|^2+|u_2^n|^2\, dx \geq \be_0 > 0,
$$
and we deduce from the weak convergence in $H^1(\R^N)\times H^1(\R^N)$ and the local compactness in $L^2(\R^N)\times L^2(\R^N)$ that $\left(u_1^n(x-y_n), u_2^n(x-y_n)\right) \rightharpoonup (u_1, u_2) \neq (0, 0)$ in $H^1(\R^N) \times H^1(\R^N)$. Our aim is to prove that $w_i^n(x) := u_i^n(x) - u_i(x+y_n) \rightarrow 0$ in $L^p(\R^N)$ for $2 < p < 2^*, i = 1, 2$ and so we suppose by contradiction that there exists a $2<q<2^*$ such that $(w_1^n, w_2^n) \nrightarrow(0, 0)$ in $L^q(\R^N) \times L^q(\R^N).$ Note that under this assumption there exists a sequence
$\{z_n\} \subset \R^N$ such that
$$
\left(w_1^n(x-z_n), w_2^n(x-z_n)\right)\rightharpoonup (w_1, w_2)\neq(0, 0)
$$
in $H^1(\R^n)\times H^1(\R^N).$ Indeed otherwise
$$
\sup_{y \in \R^N}\int_{B(y, R)}|w_1^n|^2+|w_2^n|^2\, dx = o(1),
$$
which leads to $(w_1^n, w_2^n) \rightarrow (0, 0)$ in $L^p(\R^N) \times L^p(\R^N)$ for $2<p<2^*$.
% which contradicts with our assumption
%$(w_1^n, w_2^n)\nrightarrow(0, 0)$ in $L^q(\R^N)\times L^q(\R^N)$ for  $2<q<2^*$.
%Hence there is $\{z_n\} \subset \R^N$ satisfying
%$$
%\int_{B(-z_n, R)}|w_1^n|^2+|w_2^n|^2\, dx \geq \be_0 > 0,
%$$
%which implies
%\beq[3.6]
%\int_{B(y_n-z_n, R)}|u_1^n(x-y_n)-u_1|^2 + |u_2^n(x-y_n)-u_2|^2\, dx \geq \be_0>0,
%\eeq
%then $|z_n-y_n|\rightarrow \infty$ as $n \rightarrow \infty,$ suppose not, it follows from
%$(u_1^n(x-y_n), u_2^n(x-y_n)) \rightharpoonup (u_1, u_2)$ in $H^1(\R^N) \times H^1(\R^N)$ that \eqref{eq:3.6} fails.

Now, combining the Brezis-Lieb Lemma, 
%see \cite{Wi}, 
Lemma \ref{lem:Leib} and the translational invariance we conclude
\begin{equation}\label{3.14}
\begin{split}
&J(u_1^n, u_2^n) = J(u_1^n(x-y_n), u_2^n(x-y_n)) \\
&= J(u_1^n(x-y_n)-u_1+u_1, u_2^n(x-y_n)-u_2+u_2)\\
&= J(u_1^n(x-y_n)-u_1, u_2^n(x-y_n) - u_2) + J(u_1, u_2) + o(1) \\
&= J(w_1^n(x-y_n), w_2^n(x-y_n)) + J(u_1, u_2) + o(1) \\
&= J(w_1^n(x-z_n), w_2^n(x-z_n)) + J(u_1, u_2) + o(1) \\
&= J(w_1^n(x-z_n)-w_1+w_1, w_2^n(x-z_n)-w_2+w_2) + J(u_1, u_2) + o(1) \\
&= J(w_1^n(x-z_n)-w_1, w_2^n(x-z_n)-w_2) + J(w_1, w_2) + J(u_1, u_2) + o(1),
\end{split}
\end{equation}
and
\begin{align*}
\|u_i^n(x-y_n)\|_2^2 &= \|u_i^n(x-y_n)-u_i+u_i\|_2^2\\
                         &= \|u_i^n(x-y_n)-u_i\|_2^2 + \|u_i\|_2^2 + o(1)\\
                         &= \|w_i^n(x-z_n)-w_i+w_i\|_2^2 + \|u_i\|_2^2 + o(1)\\
                         &= \|w_i^n(x-z_n)-w_i\|_2^2 + \|w_i\|_2^2 + \|u_i\|_2^2 + o(1).
\end{align*}
Thus
\begin{equation}\label{3.15}
\begin{split}
\|w_i^n(x-z_n)-w_i\|_2^2 &= \|u_i^n(x-y_n)\|_2^2 - \|w_i\|_2^2 - \|u_i\|_2^2 + o(1)\\
                             &= a_i - \|w_i\|_2^2 - \|u_i\|_2^2 + o(1)\\
                             &=b_i + o(1),
\end{split}
\end{equation}
where $b_i := a_i - \|w_i\|_2^2 - \|u_i\|_2^2.$
Noting that
\begin{align*}
\|w_i\|_2^2 & \leq \liminf_{n \rightarrow \infty} \|w_i^n(x-z_n)\|_2^2
                         = \liminf_{n \rightarrow \infty}\|u_i(x-y_n) - u_i\|_2^2\\
                &= a_i- \|u_i\|_2^2,
\end{align*}
then $b_i \geq 0$ for $i = 1, 2.$ Recording that $J(u_1^n,u_2^n) \to m(a_1,a_2)$, in view of \eqref{3.15}, Lemma \ref{lem:m} $(ii)$ and \eqref{3.14}, we get
\beq[3.16]
m(a_1, a_2) \geq m(b_1, b_2) + J(w_1, w_2) + J(u_1, u_2).
\eeq
If $J(w_1, w_2) > m(\|w_1\|_2^2, \|w_2\|_2^2)$ or $J(u_1, u_2) > m(\|u_1\|_2^2, \|u_1\|_2^2),$ then,
from \eqref{eq:3.16} and Lemma \ref{lem:m} $(iii)$, it follows
$$
m(a_1, a_2) > m(b_1, b_2) +  m(\|w_1\|_2^2, \|w_2\|_2^2) + m(\|u_1\|_2^2, \|u_1\|_2^2) \geq m(a_1 ,a_2),
$$
which is impossible. Hence $J(w_1, w_2) = m(\|w_1\|_2^2, \|w_2\|_2^2)$ and $J(u_1, u_2) = m(\|u_1\|_2^2, \|u_1\|_2^2).$
We denote by $\tilde{u}_i, \tilde{w}_i$ the classical Schwarz symmetric-decreasing rearrangement of $u_i, w_i$ for $i = 1, 2,$. Since
$$
 \|\tilde{u}_i\|_2^2 = \|u_i\|_2^2 ,\quad
\|\tilde{w}_i\|_2^2 = \|w_i\|_2^2,
$$
$$
J(\tilde{u}_1, \tilde{u}_2) \leq J(u_1, u_2), \quad
J(\tilde{w}_1, \tilde{w}_2) \leq J(w_1, w_2)
$$
see for example \cite{LL}, we deduce that
$$J(\tilde{u}_1, \tilde{u}_2) = m(\|u_1\|_2^2, \|u_2\|_2^2), \quad J(\tilde{w}_1, \tilde{w}_2) = m(\|w_1\|_2^2, \|w_2\|_2^2).
$$
Therefore, $(\tilde{u}_1, \tilde{u}_2), (\tilde{w}_1, \tilde{w}_2)$ are solutions of the system \eqref{eq:1.1} and from standard regularity results we have that
$\tilde{u}_i, \tilde{w}_i \in C^2(\R^N)$ for $i = 1, 2.$

At this point Lemma \ref{lem:relem} comes into play. Without restriction we may assume  $u_1 \neq 0.$ We divide into two cases.

\noindent{\it Case 1: $u_1 \neq 0$ and $w_1 \neq 0$.} \\
By virtue of Lemma \ref{lem:relem} $ (ii), (iv), (v)$,
$$
\int_{\R^N} | {\nabla \{ \tilde{u}_1,  \tilde{w}_1\}}^{\star}| \, dx
         < \int_{\R^N} |\nabla \tilde{u}_1|^2 + |\nabla \tilde{w}_1|^2 \, dx
         \leq \int_{\R^N} |\nabla u_1|^2 + |\nabla w_1|^2 \, dx,
$$
\begin{align*}
\int_{\R^N}|\{\tilde{u}_1, \tilde{w}_1\}^{\star}|^{r_1}|\{\tilde{u}_2, \tilde{w}_2\}^{\star}|^{r_2} \, dx
        &= \int_{\R^N} \{|\tilde{u}_1|^{r_1}, |\tilde{w}_1|^{r_1}\}^{\star}
               \{|\tilde{u}_2|^{r_2}, |\tilde{w}_2|^{r_2}\}^{\star}\, dx ,\\
        & \geq \int_{\R^N} |\tilde{u}_1|^{r_1} |\tilde{u}_2|^{r_2}
              + |\tilde{w}_1|^{r_1} |\tilde{w}_2|^{r_2} \, dx \\
        & = 
            \int_{\R^N} {(|u_1|^{r_1})}^{\tilde{}} {(|u_2|^{r_2})}^{\tilde{}}
              + {(|w_1|^{r_1})}^{\tilde{}} {(|w_2|^{r_2})}^{\tilde{}} \, dx,
            \\
        & \geq \int_{\R^N} |u_1|^{r_1} |u_2|^{r_2}
              + |w_1|^{r_1} |w_2|^{r_2} \, dx,
\end{align*}
and thus
\beq [re1]
J(u_1, u_2) + J(w_1, w_2) > J(\{\tilde{u}_1, \tilde{w}_1\}^{\star}, \{\tilde{u}_2, \tilde{w}_2\}^{\star}).
\eeq
Also from Lemma \ref{lem:relem} $(iii)$, for $i=1,2$,
\beq [re2]
\int_{\R^N} |\{\tilde{u}_i, \tilde{w}_i\}^{\star}|^2 \, dx
= \int_{\R^N} |\tilde{u}_i|^2 + |\tilde{w}_i|^2 \, dx
= \int_{\R^N} |u_i|^2 + |w_i|^2 \, dx,
\eeq
and taking \eqref{eq:3.16}-\eqref{eq:re2} and Lemma \ref{lem:m} $(iii)$ into consideration, one obtains the contradiction
$$m(a_1, a_2) > m(b_1, b_2) + m(a_1-b_1, a_2-b_2) \geq m(a_1, a_2).$$

\noindent{\it Case 2: $u_1 \neq 0$, $w_1 =0$ and $w_2 \neq 0$.} \\
If $u_2 \neq 0$, we can reverse the role of $u_1, w_1$ and $u_2, w_2$ in {\it Case 1} to get a contradiction. Thus, we suppose that
$u_2 = 0.$ Due to Lemma \ref{lem:relem} $(ii)$-$(v)$,

\beq [re3]
\begin{split}
J(\{\tilde{u}_1, 0\}^{\star}, \{\tilde{w}_2, 0\}^{\star})
&\leq  \frac12 \int_{\R^N} |\nabla \tilde{u}_1|^2 +  |\nabla \tilde{w}_2|^2 \, dx
        - \frac{\mu_1}{p_1} \int_{\R^N} |\tilde{u}_1|^{p_1} \, dx\\
&- \frac{\mu_2}{p_2} \int_{\R^N} |\tilde{w}_2|^{p_2} \, dx
        -\be \int_{\R^N} |\tilde{u}_1|^{r_1} |\tilde{w}_2|^{r_2}\\
& <J(\tilde{u}_1, 0) + J(0, \tilde{w}_2)\\
& \leq J(u_1, 0) + J(0, w_2),
\end{split}
\eeq

%\textcolor[rgb]{0.98,0.00,0.00}{(Assumptions $u_1 \neq 0$ and $w_2 \neq 0$ could not guarantee $\int_{\R^N} |u_1|^{r_1} |w_2|^{r_2} >0$, so it is necessary to use the symmetric-decreasing %rearrangement of $u_1, w_1$)}\\
and
%\textcolor[rgb]{0.98,0.00,0.00}
%{
\beq [re4]
\begin{split}
\int_{\R^N} |\{\tilde{u}_1, 0\}^{\star}|^2 \, dx = \int_{\R^N} |\tilde{u}_1|^2 \, dx = \int_{\R^N} |u_1|^2 \, dx,\\
\int_{\R^N} |\{\tilde{w}_2, 0\}^{\star}|^2 \, dx = \int_{\R^N} |\tilde{w}_2|^2 \, dx = \int_{\R^N} |w_2|^2 \, dx.
\end{split}
\eeq
%}
Thus using \eqref{eq:3.16}, \eqref{eq:re3}, \eqref{eq:re4} and Lemma \ref{lem:m}, we also have that
$$m(a_1, a_2) > m(b_1, b_2) + m(a_1-b_1, a_2-b_2) \geq m(a_1, a_2).$$
The contradictions obtained in {\it Cases} 1 and 2 indicate that $w_i^n(x) = u_i^n(x) - u_i(x+y_n) \rightarrow 0$ in $L^p(\R^N)$ for $2 < p < 2^*, i = 1, 2.$
\end{proof}

\begin{proof}[Proof of Theorem \ref{th:min}]
Let $\{(u_1^n, u_2^n)\}$ be a minimizing sequence for the functional $J$ on $S(a_1, a_2).$
In light of Lemma \ref{lem:conv}, we know that there exists $\{y_n\} \subset \R^N$ such that $u_i^n(x-y_n) \rightarrow u_i$ in $L^p(\R^N)$ for $2 < p < 2^*, i = 1, 2.$
Hence by weak convergence
\beq[min]
J(u_1, u_2) \leq m(a_1, a_2).
\eeq
Note that if $||u_1||_2^2 = a_1$ and $||u_2||_2^2 = a_2$ we are done. Indeed the strong convergence of $\{(u_1^n(\cdot -y_n), u_2^n(\cdot -y_n))\}$ in $H^1(\R^N) \times H^1(\R^N)$ then directly follows. To show that $||u_1||_2^2 = a_1$ and $||u_2||_2^2 = a_2$ we assume by contradiction that
$\|u_1\|_2^2 := b_1 < a_1$ or
$\|u_2\|_2^2 := b_2 < a_2.$ By definition $J(u_1,u_2) \geq m(b_1,b_2)$ and thus it results from \eqref{eq:min} that $m(b_1,b_2) \leq m(a_1,a_2)$. At this point since from Lemma \ref{lem:m} (iii) $m(a_1,a_2) \leq m(b_1,b_2) + m(a_1-b_1, a_2 - b_2)$ and by Lemma \ref{lem:m} (i) $m(a_1-b_1, a_2 - b_2) <0$ we have reached a contradiction and Theorem \ref{th:min} is proved.
\end{proof}

\begin{remark}\label{rem:radial}
As indicated in Remark \ref{rem:oneminimizer} a proof for the existence of minimizer for \eqref{eq:1.4} can be given working directly in $H_r^1(\R^N) \times H_r^1(\R^N)$. In such space the strong convergence in $L^p(\R^N) \times L^p(\R^N)$ is given for free. Now defining
\beq[1.4r]
m_r(a_1, a_2):=\inf_{(u_1, u_2)\in S_r(a_1, a_2)} J(u_1, u_2),
\eeq
where
$$
S_r(a_1, a_2): = \{(u_1, u_2)\in H_r^1(\R^N) \times H_r^1(\R^N): \|u_1\|_2^2 = a_1, \|u_2\|_2^2 = a_2\}
$$
we observe that we still have
\beq[1.5r]
m_r(a_1, a_2) \leq m_r(b_1, b_2)+ m_r(a_1-b_1, a_2-b_2),
\eeq
where $0\leq b_i \leq a_i$ for $ i=1, 2$. Indeed since we can choose a minimizing sequence which consists of Schwarz symmetric functions (which are in particular radially symmetric) it results that $m_r(c_1,c_2) = m(c_1,c_2)$ for any $c_1 \geq 0, c_2 \geq 0$ and \eqref{eq:1.5r} follows from Lemma \ref{lem:m} (iii). Thus we can end the proof as previously.
\end{remark}

\begin{remark}
In \cite{BJ}, \eqref{eq:1.5r} was not observed and  the fact that the weak limit belongs to $S_r(a_1,a_2)$ was proved using Liouville's type arguments, as developed in \cite{Ik}, see also \cite{CaChWe, Ik1}. It is the use of these arguments, which induces the restriction on the dimension $N$ in \cite[Theorem 2.1]{BJ}.
\end{remark}

\section{Proof of Theorem \ref{th:sta}}\label{sec:pf-th:sta}

Since our proof relies on the classical arguments of  
%\cite{Ca} or 
\cite{CL}, we only give a sketch.
\begin{proof}[Proof of Theorem \ref{th:sta}]
By contradiction, we assume that there is a $\eps_0 > 0$, $\{(\psi_1^n(0), \psi_2^n(0))\}$ and $\{t_n\} \subset \R^+$ such that
\begin{equation*}
\inf_{(u_1, u_2)\in G(a_1, a_2)}
\|(\psi_1^n(0), \psi_2^n(0)) - (u_1,u_2)\| \rightarrow 0,
\end{equation*}
and
\beq[4.1]
\inf_{(u_1, u_2) \in G(a_1, a_2)}
\|(\psi_1^n(t_n),\psi_2^n(t_n)) - (u_1, u_2)\| \geq \eps_0.
\eeq
Since by the conservation laws, 
$$
\|\psi_i^n(t_n)\|_2^2 = \|\psi_i^n(0)\|_2^2,
\quad J(\psi_1^n(t_n), \psi_2^n(t_n)) = J(\psi_1^n(0), \psi_2^n(0)), \quad \mbox{for } i=1,2,
$$
if we define
$$
\hat{\psi}_i^n = \frac{\psi_i^n(t_n)}{\|\psi_i^n(t_n)\|_2^2} a_i^{\frac12}, \quad \mbox{for } i=1,2,
$$
we get that
$$
\|\hat{\psi}_i^n\|_2^2 = a_i, \quad J(\hat{\psi}_1^n, \hat{\psi}_2^n) = m(a_1, a_2) + o(1).
$$
Namely $\{(\hat{\psi}_1^n, \hat{\psi}_2^n)\}$ is a minimizing sequence for \eqref{eq:1.4}. 
From Theorem \ref{th:min} it follows that it is precompact in that $H^1(\R^N) \times H^1(\R^N)$ thus \eqref{eq:4.1} fails.
\end{proof}

{\sc Address of the authors:}\\[1em]
\begin{tabular}{ll}
Tianxiang Gou & Louis Jeanjean\\
Laboratoire de Math\'ematiques (UMR 6623) & Laboratoire de Math\'ematiques (UMR 6623)\\
Universit\'{e} de Franche-Comt\'{e} & Universit\'{e} de Franche-Comt\'{e}\\
16, Route de Gray & 16, Route de Gray\\
25030 Besan\c{c}on Cedex & 25030 Besan\c{c}on Cedex\\
France & France\\
School of Mathematics and Statistics & louis.jeanjean@univ-fcomte.fr\\
Lanzhou University, Lanzhou, Gansu 730000\\
People's Republic of China\\
tianxiang.gou@univ-fcomte.fr
\end{tabular}

\end{document}